\algnewcommand\algorithmicinput{\textbf{Input:}}
\algnewcommand\Input{\item[\algorithmicinput]}
\algnewcommand\algorithmicoutput{\textbf{Output:}}
\algnewcommand\Output{\item[\algorithmicoutput]}
\newtheorem{theorem}{Theorem}
\theoremstyle{definition}
\newtheorem{definition}{Definition}
\theoremstyle{remark}
\newcommand{\crt}{\operatorname{crt}}
\newcommand{\cT}{\mathcal{T}}
\newcommand{\cD}{\mathcal{D}}
\title[Crossing number and the tangle crossing number analogies]{Analogies between the crossing number and the tangle crossing number}
\author[Anderson et al.]{Robin Anderson}
\address[Anderson]{Department of Mathematics, Saint Louis University, St. Louis, MO 63103}
\email{rander43@slu.edu}
\author[]{Shuliang Bai}
\address[Bai, Czabarka, Sz\'ekely, Whitlatch]{Department of Mathematics, University of South Carolina, Columbia, SC 29208}
\email{\{sbai, czabarka,  szekely, hww\}@math.sc.edu}
\author[]{Fidel Barrera-Cruz}
\address[Barrera-Cruz, Smith]{School of Mathematics, Georgia Institute of Technology, Atlanta, GA 30332}
\email{\{fidelbc, heather.smith\}@math.gatech.edu}
\author[]{\'Eva Czabarka}
\author[]{Giordano Da Lozzo}
\address[Da Lozzo]{Department of Computer Science, University of California, Irvine, CA 92697}
\email{gdalozzo@uci.edu}
\author[]{Natalie L. F. Hobson}
\address[Hobson]{Department of Mathematics and Statistics, Sonoma State University, Rohnert Park, CA 94928}
\email{hobsonn@sonoma.edu}
\author[]{Jephian C.-H. Lin}
\address[Lin]{Department of Mathematics, Iowa State University, Ames, IA
50011}
\email{chlin@iastate.edu}
\author[]{Austin Mohr}
\address[Mohr]{Department of Mathematics, Nebraska Wesleyan University, Lincoln, NE 68504}
\email{amohr@nebrwesleyan.edu}
\author[]{Heather C. Smith$^*$}
\author[]{L\'aszl\'o A. Sz\'ekely}
\author[]{Hays Whitlatch}
\date{\today}
\begin{document}
\let\thefootnote\relax\footnotetext{$^*$Corresponding author}

%\linenumbers

\begin{abstract}
Tanglegrams are special graphs that consist of a pair of rooted binary trees with the same number of leaves, and a perfect matching between the
two leaf-sets. These objects are of use in phylogenetics and are represented with straightline drawings where the leaves of the two plane binary trees are on two parallel lines and only the matching edges can cross. The tangle crossing number of a tanglegram is the minimum crossing number over all such drawings and is related to biologically relevant quantities, such as the number of times a parasite switched hosts.

Our main results for tanglegrams which parallel known theorems for crossing numbers are as follows.
The removal of a single matching edge in a tanglegram with $n$ leaves decreases the tangle crossing number by at most $n-3$, and this is sharp. 
Additionally, if $\gamma(n)$ is the maximum
tangle crossing number of a tanglegram with $n$ leaves, we prove
$\frac{1}{2}\binom{n}{2}(1-o(1))\le\gamma(n)<\frac{1}{2}\binom{n}{2}$. 
Further, we provide an algorithm for computing non-trivial lower bounds on the tangle crossing number in $O(n^4)$ time. This lower bound may be tight, even for tanglegrams with tangle crossing number $\Theta(n^2)$.
\end{abstract}

\keywords{Tanglegram, crossing number, planarity, trees}
\subjclass[2010]{Primary: 05C10, Secondary: 05C62, 05C05, 92B10}

\maketitle

\section{Introduction}
\label{sec:intro}
A drawing $\cD(G)$ of a graph $G$ in the plane is a set of distinct points in the plane,
one for each vertex of $G$, and a collection of simple open arcs, one for each
edge of the graph, such that if $e$ is an edge of $G$ with endpoints $v$ and $w$, then the
closure (in the plane) of the arc $\alpha$ representing $e$ consists precisely of $\alpha$ and
the two points representing $v$ and $w$. We further require that no edge--arc
intersects any vertex point. The (standard) crossing number $\operatorname{cr}(\cD(G))$ of $\cD(G)$ is the number of
pairs $(x, \{\alpha,\beta\})$, where $x$ is a point of the plane, $\alpha,\beta$ are arcs of $\cD$
representing distinct edges of $G$ such that $x\in\alpha\cap\beta$.
The crossing number $\operatorname{cr}(G)$ of a graph $G$ is defined
to be the minimum crossing number over all of its drawings.

Tanglegrams are
well studied in the phylogenetics and computer science literature. A
tanglegram of size $n$ is a triplet containing two rooted binary trees ($L$ and $R$), each
with $n$ leaves, and a fixed perfect matching $M$ between the two set of leaves.
Two tanglegrams $T_1=(L_1,R_1,M_1)$ and $T_2=(L_2,R_2,M_2)$ are the same
if there is a pair of tree-isomorphisms $(\phi,\psi)$ from $L_1$ to $L_2$ and from $R_1$ to $R_2$ that
map each pair of matched leaves to a pair of matched leaves.
A layout of a
tanglegram is a straight-line plane drawing of the trees, the first
drawn in the half plane $x\leq 0$ with its leaves on the line $x=0$
and the second in the half plane $x\geq 1$ with its leaves on the line
$x=1$, with a straight-line drawing of the matching edges between the
leaves.  The tangle crossing number $\crt(T)$ of a tanglegram $T$ is the the minimum crossing number over all
of its layouts, i.e. the minimum number of unordered pairs of
crossing edges over all layouts. The tangle crossing number
is related to the number of
times parasites switch hosts~\cite{HN} as well as the number of
horizontal gene transfers~\cite{BT}.

Though tangle crossing numbers are crossing numbers of a very specific kind of drawing of a very specific class of graphs,
a number of analogies are known between tangle crossing numbers and crossing numbers.
As with the crossing numbers of general graphs~\cite{GJ}, computing the tangle
crossing number is NP-hard~\cite{FKP}, even when both trees are
complete binary trees~\cite{BBBN}.
Testing whether a graph is planar can be done in polynomial (in fact linear) time~\cite{HT74}. Analogously, testing for tangle crossing number 0 can also be done in linear time~\cite{FKP}. Recently, Czabarka, Sz\'ekely, and
Wagner~\cite{CSW2} gave an analogue of Kuratowski's Theorem~\cite{Kura} for tanglegrams, characterizing tangle crossing number 0.
Clearly, for a graph $G$ with $e$ edges we have $\operatorname{cr}(G)=O(e^2)$, while for a tanglegram $T$ of size $n$,
$\crt(T)=O(n^2)$.
The expected crossing number of an Erd\H{o}s-R\'enyi random graph $G\in G(n,p)$ for $p=\frac{c}{n}$ for any $c>1$
is $\Theta(e^2)$ where $e=p\binom{n}{2}$ is the expected number of edges~\cite{SpT},
and the expected tangle crossing number of a random and uniformly
selected tanglegram with $n$ leaves is $\Theta(n^2)$~\cite{CSW}, i.e. both of these quantities are as large as possible in order of
magnitude.

We continue the study of the tangle crossing number with results which
parallel results for graph crossing numbers. 
Hlin\v{e}n\'{y} and Salazar \cite{HlSa} studied the crossing number of
1-edge planar graphs (i.e. graphs in which there exists an edge whose removal results in
a planar graph). For each $k\geq 1$, they define a 1-edge planar graph
$G_k$ with $2k+4$ vertices, $6k+7$ edges, and crossing number $k$. We
find that the behavior is quite similar for the tangle crossing number.  First we establish an upper bound for $\crt(T)-\crt(T-e)$ given any
tanglegram and any matching edge $e$. Then for each $n\geq 4$, we
define a tanglegram of size $n$ with tangle crossing number $n-3$ for
which there is a single matching edge whose removal yields a planar
subtanglegram. In summary, we prove the following theorem in
Section~\ref{sec:nminusthree}:

\begin{theorem}\label{thm:edgerem}
  For any tanglegram $T$ of size $n\geq 3$ and any matching edge $e$
  in $T$, let $T-e$ be the tanglegram induced by deleting the
  endpoints of $e$ and suppressing their (now degree two) neighbors. Then $\crt(T) - \crt(T-e) \leq n-3$. This
  inequality is best possible, even when $T-e$ is planar.
\end{theorem}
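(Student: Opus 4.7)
The plan is to prove the upper bound by extending an optimal layout of $T-e$, and to establish sharpness with an explicit construction. For the inequality, I would take an optimal layout $\cL$ of $T-e$ with exactly $\crt(T-e)$ crossings and construct four candidate extensions of $\cL$ to layouts of $T$, then argue that at least one of the four adds at most $n-3$ new crossings. Removing $u$ from $L$ and suppressing its (now degree-two) parent corresponds to contracting a single tree edge, so the plane embedding of $L-u$ in $\cL$ extends to a plane embedding of $L$ in exactly two ways: $u$ may be placed as the top or the bottom child of its restored parent. Similarly for $v$ in $R$. Each of the four resulting layouts agrees with $\cL$ on $T-e$, and since matching edges do not cross tree edges in a layout, the only new crossings are between $e$ and the other matching edges.

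Next, I would count these new crossings. Let $S\subseteq L$ and $S'\subseteq R$ be the leaf sets of the sibling subtrees of $u$ and $v$. In the embedding of $L-u$, the leaves partition into three contiguous blocks $L_1$, $S$, and $L_{QB}$: leaves that lie above $u$ in both candidate placements, the leaves of $S$, and leaves that lie below $u$ in both placements; the analogous blocks $R_1$, $S'$, $R_{QB'}$ appear on the right. A direct case analysis on where the endpoints of a matching edge $e'\neq e$ fall shows that $e'$ crosses $e$ in all four candidates precisely when $(x,y)\in(L_1\times R_{QB'})\cup(L_{QB}\times R_1)$, in none of the four when $(x,y)\in(L_1\times R_1)\cup(L_{QB}\times R_{QB'})$, and in exactly two of the four when $x\in S$ or $y\in S'$.

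The main obstacle is to show that some candidate has at most $n-3$ new crossings. Plain averaging over the four candidates only gives a bound near $n-1$, which is not enough, so optimality of $\cL$ must be used. The crucial observation is that any $L_1$-to-$R_{QB'}$ edge and any $L_{QB}$-to-$R_1$ edge form an ``X'' in $\cL$ and therefore cross, so the number of always-crossing edges is controlled by $\crt(T-e)$. Combining this with a case analysis based on whether the grandparents $g$ and $g'$ of $u$ and $v$ are roots of $L$ and $R$ (in which case the corresponding above-block is empty and the bound follows from a short computation), and, when neither is a root, possibly swapping children at $g$ or $g'$ to replace $\cL$ by another optimal layout with a more favorable extension, yields the bound. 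The most delicate subcase is when neither grandparent is a root and all three blocks on each side are nontrivial; here the optimality of $\cL$ is used to rule out configurations with too many always-crossing edges.

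For sharpness, for each $n\ge 4$ I would construct an explicit tanglegram $T_n$ of size $n$ together with a matching edge $e$ such that $T_n-e$ is planar and $\crt(T_n)\geq n-3$. A natural candidate has both $L$ and $R$ caterpillars (each internal node has at least one leaf child), with the matching chosen so that deleting the particular edge $e$ yields a planar subtanglegram. The upper bound $\crt(T_n)\leq n-3$ then follows from the first part of the theorem together with $\crt(T_n-e)=0$. For the matching lower bound $\crt(T_n)\geq n-3$, I would identify $n-3$ pairs of matching edges that are forced to cross in any layout of $T_n$ — for instance by repeatedly applying the planarity obstructions from the characterization of planar tanglegrams in~\cite{CSW2}.
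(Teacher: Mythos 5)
Your four-candidate extension of an optimal layout of $T-e$ is a reasonable start, and your classification of the other matching edges into ``always cross $e$'', ``never cross $e$'', and ``cross $e$ in exactly two of the four candidates'' is correct. A careful count shows this handles every configuration except one: when the sibling clades $S$ and $S'$ of $u$ and $v$ are single leaves matched to each other by an edge $f$, and all remaining $n-2$ edges are of the always-crossing type. In that configuration every one of your four candidates gives $e$ exactly $n-2$ new crossings, so no amount of averaging or choosing among the candidates helps, and this is exactly where your sketch breaks down. The tools you propose do not close it: the observation that opposite-type always-crossing edges pairwise cross in $\cL$ only says their product is at most $\crt(T-e)$, which can be quadratic and so gives no bound on their number (indeed one of the two types may be empty); and a switch at a grandparent of $u$ generally changes the number of crossings of $\cL$, so you cannot assume the resulting layout is still optimal. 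The paper kills this case with an \emph{induction on $n$} that your proposal never sets up: in this configuration $f$ crosses all $n-2$ other edges of the optimal layout of $T-e$, so $f$ contributes $n-2$ to $\crt(T-e)$, contradicting the inductive hypothesis that in a tanglegram of size $n-1$ no edge contributes more than $(n-1)-3$. Without induction (or some substitute argument ruling out an edge that crosses everything in an optimal layout), your upper-bound proof is incomplete.

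The sharpness half also has a genuine flaw. Your plan is to exhibit $n-3$ pairs of matching edges that ``are forced to cross in any layout'' of the caterpillar tanglegram. No such pairs exist in any tanglegram: as in the proof of Theorem~\ref{thm:onehalf}, making a switch at every internal vertex of $R$ reflects the right tree and complements the set of crossing pairs, so for every pair of matching edges there is a layout in which they do not cross. The obstructions of~\cite{CSW2} each force \emph{some} crossing within an induced subtanglegram, not a crossing of a prescribed pair, and distinct obstructions can be satisfied by the same crossing, so accumulating $n-3$ of them requires a disjointness argument you have not supplied. The paper instead takes a minimal counterexample $P_k$ with $\crt(P_k)<k-3$, verifies $\crt(P_n)=n-3$ for $4\le n\le 7$ by computer, and argues that in an optimal layout either no ``middle'' edge $u_iv_{k-i}$ ($2\le i\le k-2$) crosses anything --- forcing at most $3$ crossings and contradicting $\crt(P_k)\ge\crt(P_7)=4$ --- or some middle edge does, and deleting it yields $P_{k-1}$ with $\crt(P_{k-1})\le\crt(P_k)-1<(k-1)-3$, contradicting minimality. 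You would need an argument of this kind (again inductive) rather than a direct exhibition of forced crossing pairs.
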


We then examine the largest tangle crossing number
of a tanglegram of size $n$ (an analogue of the crossing number of the complete graph on $n$ vertices).
It is well known (e.g. by the crossing
lemma or by the counting method) that the crossing number of the complete graph $K_n$ is
$\Theta(n^4)=\Theta\left(\binom{n}{2}^2\right)$.  We prove the following result in Section~\ref{sec:onehalf}:

\begin{theorem}\label{thm:limit}
  For any tanglegram $T$ of size $n$,
  $\crt(T)<\frac{1}{2}\binom{n}{2}$. If $\gamma(n)$ is the maximum
  tangle crossing number among all tanglegrams of size $n$, then
\[\lim_{n\rightarrow\infty} \frac{\gamma(n)}{\binom{n}{2}} = \frac{1}{2}.\]
\end{theorem}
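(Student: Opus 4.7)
My plan is to first establish $\crt(T) < \frac{1}{2}\binom{n}{2}$, and then use it to deduce the limit. Given any layout $\cL$ of $T = (L, R, M)$, let $\cL^{*}$ denote the layout obtained by swapping the two children at every internal node of $L$; equivalently, $\cL^{*}$ is the reflection of $\cL$ across the left spine, and it is a valid layout because the mirror image of a plane rooted binary tree is again a plane rooted binary tree. In $\cL^{*}$ the vertical order of the leaves of $L$ is completely reversed while the order on the right is unchanged, so each of the $\binom{n}{2}$ pairs of matching edges crosses in exactly one of $\cL, \cL^{*}$. Hence $\crt(\cL) + \crt(\cL^{*}) = \binom{n}{2}$, giving $\crt(T) \leq \lfloor \binom{n}{2}/2 \rfloor$. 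When $\binom{n}{2}$ is odd this is already strict, so assume $\binom{n}{2}$ is even and $\crt(T) = \binom{n}{2}/2$. Then the reflection identity forces $\crt(\cL) = \binom{n}{2}/2$ for every layout $\cL$. I would derive a contradiction using a cherry: every binary tree with $n \geq 2$ leaves has an internal node $v$ whose two children $\ell_1, \ell_2$ are both leaves. Swapping at $v$ only transposes $\ell_1$ and $\ell_2$ in the leaf order and has no other effect, so the unique pair of matching edges $\{m(\ell_1), m(\ell_2)\}$ is the only one whose crossing status can flip. The crossing count thus changes by exactly $\pm 1$, contradicting constancy.

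\textbf{Asymptotic lower bound.} The strict upper bound gives $\limsup_n \gamma(n)/\binom{n}{2} \leq \tfrac{1}{2}$, so the remaining task is to exhibit tanglegrams with $\crt(T) \geq \binom{n}{2}/2 \cdot (1-o(1))$. My approach is probabilistic: sample $T$ as a uniformly random tanglegram of size $n$. For any \emph{fixed} layout $\cL$, the crossing count in $\cL$ equals the number of inversions of a uniform random permutation on $n$ elements, with mean $\binom{n}{2}/2$, variance $\Theta(n^3)$, and subgaussian tails $\Pr[\crt(\cL) < \binom{n}{2}/2 - t] \leq \exp(-ct^2/n^3)$ (via bounded differences, since a transposition changes the inversion count by $O(n)$). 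A union bound over the $4^{n-1}$ layouts then establishes $\crt(T) \geq \binom{n}{2}/2 - O(n^2)$ with positive probability, and hence $\gamma(n) \geq \binom{n}{2}/2 - O(n^2)$.

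\textbf{Main obstacle.} The $O(n^2)$ slack from the naive union bound is of the same order as $\binom{n}{2}$ itself, so it only yields $\gamma(n)/\binom{n}{2} \geq \tfrac{1}{2} - \Omega(1)$ rather than the required $\tfrac{1}{2} - o(1)$. Closing this gap is the crux of the proof, and I expect it to be the hardest step. I would attempt one of the following refinements: (i) choose $L, R$ of a specific shape (for instance caterpillars or complete binary trees) whose tree automorphisms collapse many swap patterns to the same leaf ordering, reducing the effective union-bound term well below $4^{n-1}$; (ii) use a chaining or Azuma-type argument that exploits the correlation between layouts differing by a single swap, tightening the control on the maximum over layouts; or (iii) replace the random matching with a deterministic pseudorandom one (e.g., a quadratic-residue permutation on a caterpillar--caterpillar tanglegram) for which the inversion count of every admissible ordering can be bounded directly. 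Any such refinement should produce a family $(T_n)$ in which every layout has crossing count within $o(n^2)$ of $\binom{n}{2}/2$, completing the theorem.
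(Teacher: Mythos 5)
Your first half is correct and is essentially the paper's argument for Theorem~\ref{thm:onehalf}: reflect one tree (the paper switches every internal vertex of $R$; you switch every internal vertex of $L$, which is symmetric) to get $\crt(\cL)+\crt(\cL^{*})=\binom{n}{2}$, then rule out equality with a cherry switch that flips the crossing status of exactly one pair of matching edges. That part needs no changes.

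The second half has a genuine gap, and you have in effect conceded it yourself: your union bound over the $4^{n-1}$ layouts loses $\Theta(n^{2})$, which is the same order as $\binom{n}{2}$, so it proves only $\gamma(n)\ge (\tfrac12-c)\binom{n}{2}$ for some constant $c>0$, not the required $\tfrac12-o(1)$. None of the three proposed repairs is carried out, and (i) and (ii) are unlikely to succeed even in principle: the minimum of exponentially many inversion counts, each with standard deviation $\Theta(n^{3/2})$, should genuinely sit $\Theta(n^{2})$ below the mean for a uniformly random matching, and the paper's own simulations (Section~5) are consistent with random tanglegrams having tangle crossing number a constant factor below $\tfrac12\binom{n}{2}$. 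So the crux of the theorem --- exhibiting tanglegrams with $\crt(T)\ge(\tfrac12-o(1))\binom{n}{2}$ --- is missing. The paper instead gives a short explicit construction (Theorem~\ref{th:liminf}): for $n=k^{2}$, build each tree from a root tree $L_0$ with $k$ leaves, hang a clade of $k$ leaves at each, and match leaf $j$ of the $i$-th left clade to leaf $i$ of the $j$-th right clade. Since the leaves of each clade occupy a consecutive vertical block in every layout, for every choice of two left clades and two right clades at least one of the two relevant pairs of matching edges must cross, forcing $\crt(T)\ge\binom{k}{2}^{2}=(\tfrac12-o(1))\binom{k^{2}}{2}$; general $n$ is handled by embedding $\cT_{\lfloor\sqrt{n}\rfloor^{2}}$ as a subtanglegram. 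This is close in spirit to your option (iii), but the matching is a highly structured ``transpose'' pattern rather than a pseudorandom permutation, and the crossings are counted by a deterministic pigeonhole on pairs of clades rather than by concentration. If you want to salvage your write-up, replace the probabilistic section with such an explicit construction.
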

Interestingly, the structure of a size $n$ tanglegram with maximum tangle crossing number remains unknown.

We conclude with a polynomial time algorithm for computing lower
bounds on the tangle crossing number in Section~\ref{lowerb}. Drawing random tanglegrams of size $n$ from a uniform distribution, we give computational evidence that these lower bounds are $\Theta(n^2)$ with high probability, thus matching the result of Czabarka, Sz\'ekely, Wagner~\cite{CSW} that such a tanglegram has tangle crossing number $\Theta(n^2)$ with high probability.

\section{Preliminaries}\label{sec:pre}

Before delving into the proofs of our main theorems, we need to
establish some terminology and more formal definitions. A rooted
binary tree is a tree in which one vertex is designated as the root
and each vertex has either 0 or 2 children. A vertex with 0 children
is a \emph{leaf}. A vertex with 2 children is called an \emph{internal
  vertex}. Thinking of the root as a common ancestor to all other
vertices, the notions of \emph{descendant}, \emph{parent},
\emph{children} and \emph{sibling} become clear. If $B$ is a rooted
binary tree, a subset of the leaves of $B$ induces a binary subtree
$B'$, obtained from the smallest subtree of $B$ by suppressing all
degree 2 vertices and choosing as the root of $B'$ the vertex which
was closest to the root of $B$. For any internal vertex $v$ of $B$,
the subtree induced by the leaves which are descendants of $v$ is a
\emph{clade} of $B$ at $v$. If the two children of $v$ are leaves,
then the corresponding clade is called a \emph{cherry}.

A \emph{tanglegram layout} is a straight-line drawing in the plane of
two rooted binary trees, $L$ and $R$, each with $n$ leaves and a
perfect matching $M$ between their leaves (each leaf of $L$ paired
with a unique leaf of $R$) having the following properties:
\begin{itemize}
\item A plane drawing of $L$ appears in the half plane $x\leq 0$ with
  only the leaves of $L$ on the line $x=0$.
\item A plane drawing of $R$ appears in the half plane $x\geq 1$ with
  only the leaves of $R$ on the line $x=1$.
\item The matching is represented by a (straight-line) drawing of edges connecting each
  leaf of $L$ with the appropriate leaf of $R$.
\end{itemize}
The crossing number of such a layout is precisely the number of unordered pairs
of matching edges which cross. As there are $n$ matching edges, the
crossing number is clearly at most $\binom{n}{2}$.

To transform one layout into another, we define a \emph{switch}. First
observe that a layout induces a total order on the leaves of $L$ by
the $y$-coordinate of the leaves on the line $x=0$. Now each internal
vertex $v$ of $L$ has two children $v_1$ and $v_2$. To make a switch
at $v$, redraw the tree $L$ so that in the new layout, the order of
leaves $\ell$ and $\ell'$ is reversed if and only if one was a leaf
in the clade at $v_1$ and the other was a leaf in the clade at
$v_2$. The resulting tanglegram layout displays the new drawing of
$L$, an unchanged drawing of $R$, and the corresponding straight-line
drawing of the matching edges connecting the appropriate pairs of
leaves. Switch operations at internal vertices of $R$ are defined
analogously. Observe that the switch operation defines an equivalence
relation on the set of tanglegram layouts and each equivalence class
will be called a \emph{tanglegram}, denoted by the triple $(L,R,M)$.

Let $T=(L,R,M)$ be a tanglegram. The \textit{size} of $T$ is the size
of the matching $M$ (also the number of leaves in $L$ and the number
of leaves in $R$). The \emph{tangle crossing number} of $T$, denoted
$\crt(T)$, is the minimum number of pairs of edges that cross, among
all layouts of $T$. If $T$ has size $n$ then one can easily deduce
that $\crt(T)\leq \binom{n}{2}$.

Given a tanglegram $T=(L,R,M)$, a subset $M'$ of $M$ induces a
\emph{subtanglegram} $T'=(L',R',M')$ where $L'$ is the subtree of $L$
induced by leaves of $L$ which are endpoints of edges in $M'$ and $R'$
is defined similarly.

We let $\gamma(n)$ to denote the maximum tangle crossing number among all
tanglegrams of size $n$. In addition, we utilize the now standard notation $[n]$ for the set $\{1,2,\ldots, n\}$.

\section{Subtanglegrams of one size smaller}
\label{sec:nminusthree}

In a tanglegram of size $n$, the tangle crossing number is at most $\frac{1}{2}\binom{n}{2}$ (Theorem~\ref{thm:limit}). Given a tanglegram with tangle crossing number close to this upper bound, on average, each matching edge crosses 
one fourth of all the other matching edges. We explore the \emph{maximum}
number of crossings a single edge could contribute to the overall
tangle crossing number. Phrased another way, for any tanglegram $T$ of
size $n$ and subtanglegram $T'$ of size $n-1$, we determine the
maximum value of $\crt(T)-\crt(T')$.  The result is given in
Theorem~\ref{thm:edgeremoval}, an upper bound which
Theorem~\ref{thm:tight} shows to be tight, even for tanglegrams with
$T'$ planar. These two theorems together complete the proof of
Theorem~\ref{thm:edgerem}.

 Throughout this section, given
a tanglegram $T=(L,R,M)$ and $e\in M$, we use $T-e$ to denote the
subtanglegram of $T$ induced by edges in $M-e$.

\begin{theorem}
\label{thm:edgeremoval}
If $T = (L,R,M)$ is a tanglegram of size $n \geq 3$ and $e$ is any matching
edge of $T$, then \[\crt(T) - \crt(T-e) \leq n - 3.\]
\end{theorem}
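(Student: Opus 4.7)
The plan is to begin from an optimal layout $\cL'$ of $T-e$ and extend it to a layout of $T$ by reinserting the matching edge $e$ together with its endpoints $e_L$ and $e_R$. Because no crossings among the edges of $T-e$ are added in this process, it suffices to show that some extension of $\cL'$ introduces at most $n-3$ crossings involving $e$; this gives $\crt(T)\le\crt(T-e)+(n-3)$.

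In any valid layout of $T$, the leaves $\{e_L\}\cup S_L$ (where $S_L$ is the sibling clade of $e_L$ in $L$) form the clade rooted at the parent of $e_L$ and so must occupy contiguous $y$-coordinates; consequently $e_L$ can be placed only immediately above or immediately below the contiguous block of $S_L$-leaves in $\cL'$. The analogous statement holds for $e_R$ relative to its sibling clade $S_R$, yielding four candidate extensions of $\cL'$. I would classify each of the other $n-1$ matching edges $e'=(\ell',r')$ into one of nine ``cells'' according to whether $\ell'$ lies above, within, or below the $S_L$-block and whether $r'$ lies above, within, or below the $S_R$-block, and express the number $c_e$ of crossings of $e$ with other edges, in each of the four extensions, as an explicit linear combination of these nine cell counts.

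The main step is to show that $\min c_e\le n-3$ over the four extensions. For most configurations this follows from a direct case analysis of the four values, exploiting in particular the following cherry-switch observation: when $|S_L|=1$, the two options for $e_L$ differ only in whether $e$ crosses the unique matching edge incident to the leaf of $S_L$ (all other crossings of $e$ are unaffected), and symmetrically for $e_R$ when $|S_R|=1$. The principal obstacle is the degenerate sub-case in which both $S_L$ and $S_R$ are singletons whose leaves are matched to each other by $M$: here the cherry switches together avoid only a single edge, so the four extensions may all yield $c_e=n-2$ at first sight. I would resolve this by considering a switch at the grandparent of $e_L$ in $L'$ (which swaps the singleton $S_L$-block with the opposite block): should the bad configuration occur, this switch would strictly decrease the crossing count of $\cL'$, contradicting the optimality of $\cL'$. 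Hence the bad configuration cannot arise, and $\min c_e\le n-3$ holds in every case.
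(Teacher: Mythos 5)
Your overall skeleton --- start from an optimal layout $\cL'$ of $T-e$, reinsert $e_L$ and $e_R$ immediately above or below their sibling clades, and show that one of the four resulting extensions gives $e$ at most $n-3$ crossings --- is exactly the paper's, and you isolate the same exceptional configuration (both sibling clades are singletons whose leaves are matched to each other by an edge $f$). Where you genuinely diverge is in how that configuration is eliminated. The paper runs the entire proof as an induction on $n$: in the bad configuration $e$ would cross all $n-2$ edges other than $f$, hence so would $f$ (no leaves separate the endpoints of $e$ and $f$ on either side), so $f$ contributes $n-2>(n-1)-3$ crossings in an optimal layout of the size-$(n-1)$ tanglegram $T-e$, contradicting the induction hypothesis. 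You instead note that if $f$ crosses every other edge of $T-e$ in $\cL'$, then a switch at the parent of $f$'s left leaf in $L_u$ flips the crossing status of exactly the pairs $\{f,h\}$ with $h$ leaving that leaf's sibling clade --- all of which were crossings --- so the switch strictly decreases the crossing count, contradicting the optimality of $\cL'$. This is correct (the sibling clade is nonempty because $L_u$ has at least two leaves for $n\ge 3$), and it is arguably an improvement: it removes the induction and the computer-checked base cases entirely. The one place your write-up falls short of a complete proof is the non-degenerate case, which you dispatch with ``a direct case analysis of the four values'' over nine cells without carrying it out; the claim is true, but you should either do that bookkeeping or adopt the paper's cleaner route: choose leaves $u_{L'}\in S_L$ and $v_{R'}\in S_R$ that are not matched to each other and use the ``matched upward/downward'' trichotomy to pick one of the four placements in which $e$ avoids both of their matching edges, hence crosses at most $(n-1)-2=n-3$ edges.
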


\begin{proof} We will proceed by induction on $n$. First observe that
  if $T$ is a tanglegram of size at most $3$ then it is planar, and if $T$ is a tanglegram of size $4$ then $\crt(T)\le 1$~\cite{CSW2}; so the theorem
  is trivial when $n\le 4$.

  Let $n\ge 5$ and suppose that in every tanglegram of size $n-1$, each
  edge contributes at most $(n-1)-3$ to the tangle crossing number. Fix a tanglegram $T = (L,R,M)$ of size $n$, and let
  $e\in M$ be an arbitrary matching edge of $T$. Say $e$ has endpoints
  $u$ in $L$ and $v$ in $R$.  Fix an optimal layout $D'$ of
  $T - e = (L_{u}, R_{v}, M-e)$ with the fewest number of crossings.

  In $L$, let $w_{L'}$ be the parent of $u$ and let $L'$ be the clade at the other child of $w_{L'}$. (Similarly, define $w_{R'}$ and $R'$.)
  There are two planar drawings of $L$ whose subdrawings of $L_u$ agree with the drawing of $L_u$ in $D'$, one with $u$ immediately
  above the leaves of $L'$ and one with $u$ immediately below the leaves of $L'$. The ordering of the leaves of $L_u$ in each of these
  drawings of $L$ is exactly the same as the ordering of the leaves in the drawing of $L_u$ in $D'$. Further, one of these drawings of
  $L$ can be obtained from the other by making a switch at $w_{L'}$. A similar claim can be made about $R$, $v$, $R_v$, $w_{R'}$ and $R'$.
  Figure~\ref{fig:illustration} uses dashed lines to indicate the two potential positions of $u$ and for $v$ in a drawing of $T$.

    \begin{figure}[ht]
    \centering
    \begin{tikzpicture}[scale=0.5,
every picture/.style={thick}]
\tikzstyle{none}=[inner sep=0pt]
\tikzstyle{invisible}=[circle,fill=Black,draw=Black]
\tikzstyle{simple}=[-,draw=Black,line width=2.000]
		\node [style=none] (0) at (0, 0) {};
		\node [draw,circle,black,inner sep=2pt,label=west:$u$] (1) at (0, 3) {};
		\node [draw,circle,black,inner sep=2pt,label=west:$u$] (2) at (0, 11) {};
		\node [style=none] (3) at (0, 14) {};
		\node [style=none] (4) at (-7, 7) {};
		\node [draw,circle,black,inner sep=2pt,label=west:$w_{L'}$] (5) at (-4, 7) {};
		\node [style=none] (6) at (0, 9) {};
		\node [style=invisible] (7) at (-3, 7) {};
		\node [style=none] (8) at (0, 5) {};
		\node [style=none] (9) at (-1, 7) {{\Large$L'$}};
		\node [draw,circle,black,inner sep=2pt,label=east:$v$] (10) at (4, 3) {};
		\node [style=none] (11) at (4, 9) {};
		\node [style=none] (12) at (5, 7) {{\Large$R'$}};
		\node [draw,circle,black,inner sep=2pt,label=east:$w_{R'}$] (13) at (8, 7) {};
		\node [style=none] (14) at (4, 14) {};
		\node [style=none] (15) at (4, 5) {};
		\node [style=none] (16) at (4, 0) {};
		\node [draw,circle,black,inner sep=2pt,label=east:$v$] (17) at (4, 11) {};
		\node (18) at (11, 7) {};
		\node [style=invisible] (19) at (7, 7) {};
		\node (20) at (-9, 7) {{\Large$L$}};
		\node (21) at (13, 7) {{\Large$R$}};
		
		\draw (3.center) to (4.center);
		\draw (4.center) to (0.center);
		\draw [dashed,style=simple] (2) to (5);
		\draw [dashed,style=simple] (5) to (1);
		\draw [style=simple] (6.center) to (7);
		\draw [style=simple] (7) to (8.center);
		\draw [style=simple] (8.center) to (6.center);
		\draw [style=simple] (7) to (5);
		\draw (14.center) to (18.center);
		\draw (18.center) to (16.center);
		\draw [dashed,style=simple] (17) to (13);
		\draw [dashed,style=simple] (13) to (10);
		\draw [style=simple] (11.center) to (19);
		\draw [style=simple] (19) to (15.center);
		\draw [style=simple] (15.center) to (11.center);
		\draw [style=simple] (19) to (13);
 
\end{tikzpicture}
    \caption{A figure that illustrates part of the proof of
      Theorem~\ref{thm:edgeremoval}}
    \label{fig:illustration}
  \end{figure}
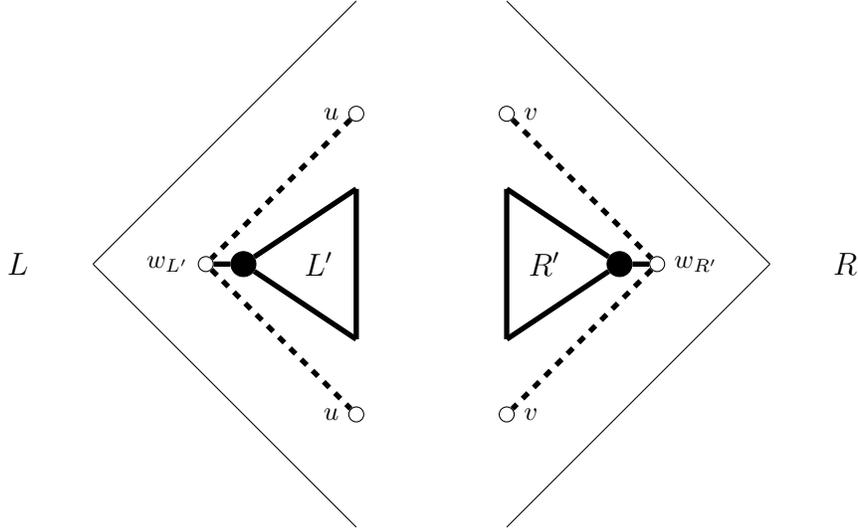

 We claim that there is drawing $D$ of $T$ using one of these two drawings of $L$ and one of these two drawings of $R$ in which matching edge $e$ crosses at most $n-3$ edges. This is sufficient to complete the proof as the number of crossings between two edges of $M-e$ in $D$ is exactly $\crt(T-e)$ (because the underlying drawing of $T-e$ remained unchanged) which implies $\crt(T) \leq \crt(D) \leq \crt(T-e) + (n-3)$ as desired.

 First observe that $L'$ and $R'$ each have at least one leaf. There are two cases to consider: (1) $L'$ and $R'$ each have exactly one leaf and they are matched in $M-e$, or (2) there is a leaf in $L'$ and a leaf in $R'$ which are not matched with one another.

   For the first case, let $f$ be the edge matching the single leaf in $L'$ with the single leaf in $R'$. Consider the drawing of $T$ with $u$ above $L'$ and $v$ above $R'$ so that   $e$ is above $f$. Suppose, for
  contradiction, that $e$ participates in strictly more than $n-3$
  crossings in this drawing of $T$. As $e$ does not cross itself or
  $f$, matching edge $e$ must cross every other edge in $M$.
  Since there are no leaves between the left endpoints of $e$ and $f$ and
  no leaves between the right endpoints of $e$ and $f$, it follows
  that $f$ also participates in $n - 2$ crossings in this drawing of
  $T$.  As the drawing of $T-e$ was optimal, we see that $f$
  contributes $n-2$ to the tangle crossing number of tanglegram $T-e$
  which had size $n-1$.  However, by the induction hypothesis, each edge in
  $T-e$ contributes at most $(n-1)-3$ crossings to $\crt(T-e)$, a
  contradiction.

  For the second case, let $u_{L'}$ be a leaf in $L'$ and $v_{R'}$ be a leaf in $R'$ which are not matched to each other. We say $u_{L'}$ (respectively, $v_{R'}$) is ``matched upward'' if the
  leaf to which it is matched is at least as high as the lowest leaf
  of $R^\prime$ (respectively, $L^\prime$). The leaf $u_{L'}$
  (respectively, $v_{R'}$) is ``matched downward'' if the leaf to
  which it is matched is no higher than the highest leaf of $R^\prime$
  (respectively, $L^\prime$).

  Let $f_1$ and $f_2$ be the matching edges, one with endpoint $u_{L'}$ and the other with endpoint $v_{R'}$. If $u_{L'}$ and $v_{R'}$
  are both matched upward (respectively, downward), draw the vertex
  $u$ below (respectively, above) $L^\prime$ and the vertex $v$ below
  (respectively, above) $R^\prime$.  On the other hand, if $u_{L'}$ is
  matched to a leaf higher (lower) than the leaves of $R'$ and
  $v_{R'}$ is matched to a leaf lower (higher) than the leaves of
  $L'$, then draw $u$ directly below (above) the leaves of $L'$ and
  $v$ directly above (below) the leaves of $R'$.  In each of these
  cases, the edge $e$ crosses neither $f_1$ nor $f_2$, and therefore
  crosses at most $n-3$ other edges, from which
  $\crt(T) - \crt(T - e) \leq n-3$ follows.
  \end{proof}

Now we prove that the inequality in Theorem~\ref{thm:edgeremoval} is
best possible. To do so, we present an infinite family of tanglegrams
$\{P_n: n\geq 4\}$ such that $P_n$ has size $n$, tangle crossing number $n-3$, and
there exists a matching edge $e$ such that $\crt(P_n-e)=0$. We say
$P_n$ is \emph{1-edge tangle planar} as $P_n$ is not planar but
there is a matching edge $e$ such that the subtanglegram $P_n-e$ is
planar. The two binary trees in $P_n$ are rooted caterpillars.

\begin{definition}
  The \emph{rooted caterpillar} $C_n$ of size $n$ is the unique rooted
  binary tree with $n$ leaves such that there are two leaves of
  distance $n-1$ from the root and for each $i\in [n-2]$ there is one
  leaf of distance $i$ from the root.
  (See Figure \ref{fig:c6} for an example.)
\end{definition}

\begin{figure}[ht]
\centering
\begin{tikzpicture}[yscale=0.8,scale=0.4,every node/.style={draw,circle,black,inner sep=2pt},
every picture/.style={thick},
leavesnode/.style={circle, draw, inner sep=1pt}]
\node (0) at (0, 0) {};
\node (1) at (0, 2) {};
\node (2) at (-1, 1) {};
\node (3) at (0, 4) {};
\node (4) at (-2, 2) {};
\node (5) at (0, 6) {};
\node (6) at (-3, 3) {};
\node (7) at (-3, 3) {};
\node (8) at (0, 8) {};
\node (9) at (-4, 4) {};
\node (10) at (0, 10) {};
\node (11) at (-5, 5) {};

\draw (11) -- (9);
\draw (9) -- (6);
\draw (6) to (4);
\draw (4) to (2);
\draw (2) to (0);
\draw (2) to (1);
\draw (4) to (3);
\draw (6) to (5);
\draw (9) to (8);
\draw (11) to (10);
\end{tikzpicture}
\caption{The caterpillar $C_6$}
\label{fig:c6}
\end{figure}

\begin{definition}
  For each $n\ge 4$, we define the \emph{caterpillar tanglegram}
  $P_n = (L_n,R_n,M_n)$ as follows: $L_n$ and $R_n$ are copies of the
  rooted caterpillar $C_n$. We label the leaves of $L_n$ as $u_i$,
  where $i$ is the leaf's distance from the root. Since there are
  precisely two leaves at distance $n-1$, we arbitrarily label one of
  these $u_n$ instead. Similarly, the leaves of $R_n$ are labeled
  using $v_i$. Finally, we construct the matching
  $M_n = \{u_iv_{n-i} \mid i \in [n-1]\} \cup \{u_nv_n\}$. (See Figure
  \ref{fig:p8} for an example.)
\end{definition}

\begin{figure}[ht]
  \centering
  \begin{tikzpicture}[yscale=0.8,scale=0.5,every node/.style={draw,circle,black,inner sep=2pt},
every picture/.style={thick},
leavesnode/.style={circle, draw, inner sep=1pt}]
\begin{scope}
\node[label=west:$u_7$] (0) at (0, 0) {};
\node[label=west:$u_8$] (1) at (0, 2) {};
\node[label=west:$u_6$] (3) at (0, 4) {};
\node[label=west:$u_5$] (5) at (0, 6) {};
\node[label=west:$u_4$] (8) at (0, 8) {};
\node[label=west:$u_3$] (10) at (0, 10) {};
\node[label=west:$u_2$] (11) at (0, 12) {};
\node[label=west:$u_1$] (12) at (0, 14) {};
\node[label=east:$v_8$] (16) at (3, 12) {};
\node[label=east:$v_4$] (17) at (3, 6) {};
\node[label=east:$v_5$] (22) at (3, 8) {};
\node[label=east:$v_7$] (26) at (3, 14) {};
\node[label=east:$v_6$] (27) at (3, 10) {};
\node[label=east:$v_3$] (28) at (3, 4) {};
\node[label=east:$v_1$] (30) at (3, 0) {};
\node[label=east:$v_2$] (31) at (3, 2) {};
\end{scope}

\node (2) at (-1, 1) {};
\node (4) at (-2, 2) {};
\node (6) at (-3, 3) {};
\node (7) at (-3, 3) {};
\node (9) at (-4, 4) {};
\node (13) at (-5, 5) {};
\node (14) at (-6, 6) {};
\node (15) at (-7, 7) {};
\node (18) at (8, 9) {};
\node (19) at (4, 13) {};
\node (20) at (6, 11) {};
\node (21) at (5, 12) {};
\node (23) at (7, 10) {};
\node (24) at (9, 8) {};
\node (25) at (10, 7) {};
\node (29) at (6, 11) {};

\draw (12) to (15);
\draw (15) to (14);
\draw (14) to (13);
\draw (13) to (9);
\draw (9) to (6);
\draw (6) to (4);
\draw (4) to (2);
\draw (2) to (0);
\draw (2) to (1);
\draw (4) to (3);
\draw (6) to (5);
\draw (9) to (8);
\draw (13) to (10);
\draw (14) to (11);
\draw (30) to (25);
\draw (25) to (24);
\draw (24) to (18);
\draw (18) to (23);
\draw (23) to (29);
\draw (29) to (21);
\draw (21) to (19);
\draw (19) to (26);
\draw (19) to (16);
\draw (21) to (27);
\draw (29) to (22);
\draw (23) to (17);
\draw (18) to (28);
\draw (24) to (31);
\draw (12) to (26);
\draw (0) to (30);
\draw (16.south west) to (1.north east);
\draw (3) to (31);
\draw (5) to (28);
\draw (8) to (17);
\draw (10) to (22);
\draw (11) to (27);
\end{tikzpicture}
  \caption{The caterpillar tanglegram $P_8$}
  \label{fig:p8}
\end{figure}

\begin{theorem}\label{thm:tight}
  For each $n \geq 4$, the caterpillar tanglegram $P_n$ is 1-edge
  tangle planar and has tangle crossing number $n-3$.
\end{theorem}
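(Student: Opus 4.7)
The plan has three parts. First, to establish 1-edge tangle planarity and the upper bound on $\crt(P_n)$, I would remove $e=u_nv_n$ and exhibit a planar layout of $P_n-u_nv_n$: place the leaves of $L_n-u_n$ (a copy of $C_{n-1}$) in the natural order $u_1,u_2,\dots,u_{n-1}$ from top to bottom, and the leaves of $R_n-v_n$ in the reverse order $v_{n-1},v_{n-2},\dots,v_1$ from top to bottom. Both are valid caterpillar orderings, and every matching edge $u_iv_{n-i}$ becomes horizontal, so $\crt(P_n-u_nv_n)=0$. Theorem~\ref{thm:edgerem} applied to $e=u_nv_n$ then yields $\crt(P_n)\le n-3$.

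For the lower bound $\crt(P_n)\ge n-3$, I would parametrize any layout by sign vectors $x,y\in\{\pm1\}^{n-1}$, where $x_i=+1$ encodes that the switch at the appropriate internal vertex of $L_n$ places $u_i$ above its sibling subtree (with $x_{n-1}$ orienting the cherry $\{u_{n-1},u_n\}$), and similarly for $y$. A pair-by-pair crossing count then yields the bilinear identity
\[
C(x,y)=\frac{1}{2}\binom{n}{2}+\frac{1}{2}\sum_{i=1}^{n-1}x_i\,\phi_i(y),
\qquad \phi_i(y)=\sum_{k=1}^{n-i-1}y_k-y_{n-i}.
\]
I would then argue by induction on $n$, exploiting the easily verified isomorphism $P_n-u_1v_{n-1}\cong P_{n-1}$ (relabel $u_i\mapsto u'_{i-1}$ and $v_j\mapsto v'_j$ for $j\le n-2$, $v_n\mapsto v'_{n-1}$, then suppress). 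Given an optimal layout $D$ of $P_n$, let $c$ denote the number of crossings in $D$ involving $u_1v_{n-1}$. If $c\ge 1$, deleting this edge produces a $P_{n-1}$-layout with $\crt(D)-c$ crossings, and the induction hypothesis $\crt(P_{n-1})=n-4$ gives $\crt(D)\ge n-3$.

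The main obstacle is the case $c=0$. Here the non-crossing of $u_1v_{n-1}$ with every other matching edge translates, via the formula above, into $y_k=-x_1$ for $k\le n-2$ and $y_{n-1}=x_1$; the top--bottom symmetry (simultaneously flipping every switch on both sides) lets me assume $x_1=+1$, so $y=(-1,\dots,-1,+1)$. Substituting this $y$ into $\phi_i$ gives $\phi_1=-(n-1)$, $\phi_i=-(n-2-i)$ for $2\le i\le n-2$, and $\phi_{n-1}=1$; minimizing $C(x,y)$ over the remaining $x_i$'s (each chosen opposite in sign to $\phi_i$) then yields
\[
\min_{\substack{x\in\{\pm1\}^{n-1}\\x_1=+1}}C(x,y)=\frac{1}{2}\!\left(\binom{n}{2}-n-\binom{n-3}{2}\right)=n-3,
\]
closing this case. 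The base case $n=4$ needs no separate treatment: Case A reduces to $\crt(D)\ge c\ge 1=n-3$ via the trivial bound $\crt(P_3)\ge 0$, and the Case B formula with $\binom{1}{2}=0$ still gives $1$.
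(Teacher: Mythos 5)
Your proof is correct, and while it shares the overall skeleton of the paper's argument for the lower bound (induct by deleting one matching edge whose removal leaves a copy of $P_{n-1}$), it diverges in two substantive ways. For the upper bound both arguments exhibit the same planar layout of $P_n-u_nv_n$; the paper simply reads $\crt(P_n)\le n-3$ off that drawing, whereas you route it through the inequality of Theorem~\ref{thm:edgeremoval} --- either is fine and there is no circularity. For the lower bound the paper deletes a \emph{middle} edge $u_jv_{k-j}$ with $2\le j\le k-2$ that participates in a crossing, and disposes of the case where no middle edge is crossed by noting that at most $\binom{3}{2}=3$ crossings remain while $\crt(P_k)\ge\crt(P_7)=4$; this forces a computer verification of $\crt(P_n)=n-3$ for $4\le n\le 7$ and an appeal to subtanglegram monotonicity. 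You instead delete the extreme edge $u_1v_{n-1}$ (and your check that $P_n-u_1v_{n-1}\cong P_{n-1}$ is right), and when that edge is crossing-free you observe that this forces the entire right-hand switch vector, after which the bilinear identity
\[
C(x,y)=\tfrac{1}{2}\tbinom{n}{2}+\tfrac{1}{2}\sum_{i=1}^{n-1}x_i\phi_i(y)
\]
(which I verified, including the values $\phi_1=-(n-1)$, $\phi_i=-(n-2-i)$ for $2\le i\le n-2$, and $\phi_{n-1}=1$) gives the exact minimum $n-3$ over the remaining freedom. What your approach buys is a fully self-contained proof with no machine check and a genuine base case at $n=4$; what it costs is the need to set up and verify the switch-vector parametrization and the crossing formula, which is specific to caterpillar pairs but entirely correct here.
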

\begin{proof}
  Note that the tanglegram $P_{n}-u_{n}v_{n}$ is clearly a planar
  tanglegram (see Figure \ref{fig:p8}), so $P_n $ is 1-edge tangle
  planar. The same drawing demonstrates that $\crt(P_n) \leq n-3$. It
  remains to show that $\crt(P_n) \geq n-3$. Suppose, for
  contradiction, that there is some $k$ for which $\crt(P_k) <
  k-3$. Furthermore, let $k$ be the least index witnessing this strict
  inequality. One can check by
  computer
  that $\crt(P_n) = n-3$ for
  $4 \leq n \leq 7$, so $k \geq 8$.
  Since $P_k$ contains a subdrawing of $P_7$, $\crt(P_k)\ge 7-3=4$.
  There are two cases for a
  fixed optimal drawing of $P_k$: at least one matching edge in the
  set $\{u_iv_{k-i} \mid 2 \leq i \leq k-2\}$ is part of a crossing or
  else none of them are.

  In the latter case, only the edges $u_1v_{k-1}$, $u_{k-1}v_1$, and
  $u_kv_k$ have crossings, and therefore $3\ge \crt(P_k) \ge 4$, a contradiction.

  In the former case, say the edge $u_jv_{k-j}$ is part of a
  crossing. The subtanglegram induced by $M_k- u_jv_{k-j}$ is
  isomorphic to $P_{k-1}$ and has tangle crossing number at most
  $\crt(P_k)-1$. It follows that
  $\crt(P_{k-1}) \leq \crt(P_k) - 1 < (k - 1) - 3$, which contradicts
  the minimality of $k$.
  \end{proof}

\section{Maximizing the crossing number}
\label{sec:onehalf}

While a single edge  in a tanglegram of size $n$ can contribute up to $n-3$ to the tangle crossing number, not all matching edges can realize this many crossings in a drawing which minimizes the tangle crossing number. The aim of this section is to better understand the maximum tangle crossing number among tanglegrams of the same size. To prove Theorem~\ref{thm:limit}. We begin with the first part:

\begin{theorem}\label{thm:onehalf}
  If $T$ is a tanglegram of size $n$ then
  $\crt(T)<\frac{1}{2} \binom{n}{2}.$  Consequently,
   $
  \limsup_{n\rightarrow\infty}
  \frac{\gamma(n)}{\binom{n}{2}}
  \leq
  \frac{1}{2}.
   $
\end{theorem}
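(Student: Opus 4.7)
The plan is to pair layouts against one another so that their crossing numbers sum to exactly $\binom{n}{2}$, obtain the non-strict bound $\crt(T)\le\binom{n}{2}/2$ for free, and then rule out the equality case with a local move at a cherry of $L$.

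For the pairing step, given any layout $D$ of $T=(L,R,M)$, I form a new layout $D^\ast$ by performing a switch at every internal vertex of $L$. A straightforward induction on the height of $L$ shows that the composition of these switches reverses the top-to-bottom order of the leaves of $L$ along the line $x=0$, while the drawing of $R$ stays fixed: at the root we swap the two clades and, by the inductive hypothesis, the order within each clade is already reversed. Two distinct matching edges cross in a given layout precisely when the order of their $L$-endpoints disagrees with the order of their $R$-endpoints, so reversing the $L$-order flips the crossing status of every pair of matching edges. Consequently $c(D)+c(D^\ast)=\binom{n}{2}$ for the crossing counts $c(\cdot)$ of the two layouts, and hence $\crt(T)\le\min\{c(D),c(D^\ast)\}\le\binom{n}{2}/2$.

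To upgrade this to a strict inequality, I argue by contradiction and assume $\crt(T)=\binom{n}{2}/2$. Applying the pairing to an arbitrary layout $D$, both $c(D)$ and $c(D^\ast)=\binom{n}{2}-c(D)$ are at least $\crt(T)$, so both equal $\binom{n}{2}/2$; in particular $\binom{n}{2}$ is even and every layout realizes exactly this many crossings. Now I pick a cherry $v$ in $L$, with leaves $\ell_1$ and $\ell_2$ (such a $v$ exists for any tanglegram with $n\ge 2$, since an internal vertex of maximum depth has two leaf children). In any layout, $\ell_1$ and $\ell_2$ occupy two consecutive positions along the line $x=0$, because the leaves of any clade form a contiguous block. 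A switch at $v$ merely swaps these two adjacent positions and preserves every other pairwise order among the leaves of $L$; therefore only the single pair of matching edges incident to $\ell_1$ and $\ell_2$ can change its crossing status, and it does change. The resulting layout differs in crossing count from $D$ by exactly $\pm 1$, contradicting the assumed universal constancy of $c(D)$. Hence $\crt(T)<\binom{n}{2}/2$, and the limsup conclusion $\limsup_{n\to\infty}\gamma(n)/\binom{n}{2}\le 1/2$ follows at once.

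The main thing requiring care is the pairing step — one must verify cleanly that switching at every internal vertex of $L$ really does compose to reverse the leaf order of $L$, and that $D^\ast$ is a legitimate layout of the same tanglegram (which it is, since each individual switch keeps us in the same equivalence class). Once this and the cherry observation are in place, the rest of the argument is a short bookkeeping exercise.
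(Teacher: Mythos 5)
Your proof is correct and follows essentially the same route as the paper: switch at every internal vertex of one tree to produce a complementary layout with $\binom{n}{2}-c(D)$ crossings, deduce $\crt(T)\le\frac{1}{2}\binom{n}{2}$, and then rule out equality with a switch at a cherry. Your treatment of the equality case is marginally more streamlined (the paper first argues that the two cherry edges must cross in some layout before switching, whereas you observe directly that the cherry switch changes the count by exactly $\pm 1$, contradicting constancy), but the underlying ideas are identical.
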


\begin{proof}
  Let $T=(L,R,M)$ be a tanglegram.
  Suppose $\crt(T)=k$ and let $D$ be a tanglegram layout of
  $T$ having $k$ crossings.  By making a switch at every internal
  vertex in $R$, we obtain a new layout $D'$ of $T$.  Note that in
  $D'$, the plane drawing of $R$ can be viewed as a reflection of
  the drawing of $R$ in $D$ across the line $y=0$, while the plane
  drawing of $L$ is the same in both $D$ and $D'$.  For any unordered pair of
  edges $\{e,f\}$ in $M$, $e$ and $f$ cross in $D$ if and
  only if they do not cross in $D'$. This implies that $D'$ has
  exactly $\binom{n}{2}-k$ crossings. Since $\crt(T)=k$, every layout
  has at least $k$ crossings.  Consequently, $\binom{n}{2}-k\geq k$
  and $\crt(T)=k\le \frac{1}{2}\binom{n}{2}$.

  Suppose that, contrary to our statement, $\crt(T)=\frac{1}{2}\binom{n}{2}$.
  It follows from our proof so far that any layout of $T$ has $\frac{1}{2}\binom{n}{2}$ crossings, and for
  any unordered pair $\{e,f\}$ of matching edges there is a layout in which they cross.
  Let $C$ be a cherry of $R$ with leaves $\ell_1$ and $\ell_2$
  incident with matching edges $e,f\in M$. As noted above, $e$ and $f$
  must cross in some layout $D$ of $T$. From $D$, we create
  a new layout $D''$ by making a switch at the parent of $\ell_1$ and
  $\ell_2$.  The number of crossings in $D''$ is
  $\frac{1}{2}\binom{n}{2}-1$, a contradiction.
  \end{proof}

To complete the proof of Theorem~\ref{thm:limit}, we
prove
\[\liminf_{n\rightarrow\infty}\dfrac{\gamma(n)}{\binom{n}{2}}\geq
  1/2\] by constructing for each $n\geq 4$ a family $\cT_n$ of tanglegrams of
  size $n$ such that
for any $\varepsilon>0$ and large enough $n$, for all $T\in\cT_n$
 $\crt(T) / \binom{n}{2} \geq \frac{1}{2} - \varepsilon$.

We begin by constructing a family $\cT_{k^2}$ for each integer $k\ge 2$.
Any $T\in\cT_{k^2}$ is the result of the following procedure:
Take an arbitrary $(2k+2)$-tuple of size $k$ rooted binary trees
$(L_0,\ldots,L_k, R_0, \ldots, R_k)$.
Label the $k$ leaves of $L_0$ with labels $\{1,2,\ldots, k\}$ arbitrarily. For
each $i\in [k]$, identify the root of $L_i$ with leaf $i$ in $L_0$ and
assign labels $\{v_{ij}: j\in [k]\}$ to the $k$ leaves of $L_i$. The result
is the rooted binary tree $L$ with $k^2$ leaves. Similarly, $R$ is
built from $(R_0, R_1, \ldots, R_k)$ with leaf labels
$\{w_{ij}: i,j\in [k]\}$.  The matching is defined as
$M=\{v_{ij}w_{ji}: i,j\in [k]\}$.

Figure~\ref{fig:tau9} shows a tanglegram in $\cT_9$.  The binary
trees $L_1$, $L_2$, $L_3$ are marked by dashed rectangles. The tree
$L_{0}$ is the subtree of $L$ consisting of the roots of $L_{1}, L_{2}$, $L_{3}$,
and their ancestors. Note that the trees $L_0, L_1, L_2,$ and
$L_3$ need not be isomorphic. They are only isomorphic here because
there is only one binary tree, up to isomorphism, with 3 leaves.
Further, for any choice of two clades in $L$ and two clades in $R$,
there is at least one pair of edges which cross.

\begin{figure}[ht]
  \centering
  \begin{tikzpicture}[yscale=0.8,scale=0.5,every node/.style={draw,circle,black,inner sep=2pt},
every picture/.style={thick},
leavesnode/.style={circle, draw, inner sep=1pt}]

%%%Middle vertices and matching
\foreach \i in {1,2,3}{
\foreach \j in {1,2,3}{
\pgfmathsetmacro{\y}{int(24-6*\i-2*\j)}
\node [label={[label distance=-1pt]180:$v_{\i\j}$}](v\i\j) at (0,\y) {};
\node [label={[label distance=-1pt]0:$w_{\i\j}$}](w\i\j) at (4,\y) {};
}}
\foreach \i in {1,2,3}{
\foreach \j in {1,2,3}{
\draw (v\i\j) -- (w\j\i);
}}

%%%Left tree
\node (L1m) at (-1,15) {};
\node (L2m) at (-1,7) {};
\node (L3m) at (-1,1) {};
\node (L1r) at (-2,14) {};
\node (L2r) at (-2,8) {};
\node (L3r) at (-2,2) {};
\node (L0m) at (-4,5) {};
\node (L0r) at (-6,8) {};

\draw (L1m) -- (v11);
\draw (L1m) -- (v12);
\draw (L2m) -- (v22);
\draw (L2m) -- (v23);
\draw (L3m) -- (v32);
\draw (L3m) -- (v33);
\draw (L1r) -- (L1m);
\draw (L1r) -- (v13);
\draw (L2r) -- (v21);
\draw (L2r) -- (L2m);
\draw (L3r) -- (v31);
\draw (L3r) -- (L3m);
\draw (L0m) -- (L2r);
\draw (L0m) -- (L3r);
\draw (L0r) -- (L1r);
\draw (L0r) -- (L0m);

%%%Right tree
\node (R1m) at (5,13) {};
\node (R2m) at (5,7) {};
\node (R3m) at (5,1) {};
\node (R1r) at (6,14) {};
\node (R2r) at (6,8) {};
\node (R3r) at (6,2) {};
\node (R0m) at (8,11) {};
\node (R0r) at (10,8) {};

\draw (R1m) -- (w12);
\draw (R1m) -- (w13);
\draw (R2m) -- (w22);
\draw (R2m) -- (w23);
\draw (R3m) -- (w32);
\draw (R3m) -- (w33);
\draw (R1r) -- (w11);
\draw (R1r) -- (R1m);
\draw (R2r) -- (w21);
\draw (R2r) -- (R2m);
\draw (R3r) -- (w31);
\draw (R3r) -- (R3m);
\draw (R0m) -- (R1r);
\draw (R0m) -- (R2r);
\draw (R0r) -- (R0m);
\draw (R0r) -- (R3r);

%dashed rentangles
\draw [thin,dashed] (-2.5,11.5) rectangle (0.5,16.5);
\draw [thin,dashed] (-2.5,5.5) rectangle (0.5,10.5);
\draw [thin,dashed] (-2.5,-0.5) rectangle (0.5,4.5);
\draw [thin,dashed] (3.5,11.5) rectangle (6.5,16.5);
\draw [thin,dashed] (3.5,5.5) rectangle (6.5,10.5);
\draw [thin,dashed] (3.5,-0.5) rectangle (6.5,4.5);
\begin{scope}[every node/.style={rectangle}]
\node[left] at (-2.6,15) {$L_1$};
\node[left] at (-2.6,9) {$L_2$};
\node[left] at (-2.6,1) {$L_3$};
\node[right] at (6.6,15) {$R_1$};
\node[right] at (6.6,7) {$R_2$};
\node[right] at (6.6,1) {$R_3$};
\end{scope}

\end{tikzpicture}
  \caption{A tanglegram in $\cT_9$}
  \label{fig:tau9}
\end{figure}

With a well-defined set of tanglegrams $\cT_{k^2}$ for each $k\geq 2$, we now define $\cT_n$ for any
integer $n$. Fix $n$ and choose $k$ such that $k^2 \leq n<(k+1)^2$.
Let $\cT_n$ be the set of
tanglegrams of size $n$ such that $T\in \cT_n$ if and only if there is a
tanglegram $T'\in \cT_{k^2}$ with $T'$ a subtanglegram of $T$.
Figure~\ref{fig:tau5} shows a tanglegram in $\cT_5$.  The tanglegram
with bold edges is a subtanglegram in $\cT_4$.

\begin{figure}[ht]
  \centering
  \begin{tikzpicture}[yscale=0.8,scale=0.5,every node/.style={draw,circle,black,inner sep=2pt},
every picture/.style={thick},
leavesnode/.style={circle, draw, inner sep=1pt}]
%%%Middle vertices
\foreach \i in {1,...,5}{
\pgfmathsetmacro{\y}{int(10-2*\i)}
\node (v\i) at (0,\y) {};
\node (w\i) at (2,\y) {};
}
%%%All other vertices
\node (v6) at (-1,1) {};
\node (v7) at (-1,5) {};
\node (v8) at (-3,3) {};
\node (v9) at (-4,4) {};
\node (w6) at (3,1) {};
\node (w7) at (3,7) {};
\node (w8) at (4,2) {};
\node (w9) at (6,4) {};

%%%Bolded edges
\begin{scope}[line width=2pt]
\draw (v6) -- (v8) -- (v7);
\draw (v2) -- (v7) -- (v3);
\draw (v4) -- (v6) -- (v5);
\draw (w1) -- (w7) -- (w2);
\draw (w7) -- (w9) -- (w8);
\draw (w3) -- (w8) -- (w6);
\draw (w6) -- (w5);
\foreach \i/\j in {2/1,3/3,4/2,5/5}{
\draw (v\i) -- (w\j);
}
\end{scope}
%%%Other edges
\draw (v1) -- (w4);
\draw (v1) -- (v9) -- (v8);
\draw (w4) -- (w6);

%%%Marks
% \begin{scope}[every node/.style={rectangle,draw=none}]
% \node[left] at (-0.1,8) {$v$};
% \node[right] at (2.1,2) {$w$};
% \node[left] at (-4.1,4) {new root};
% \node[right] at (3.1,1) {subdivision vertex};
% \end{scope}
\end{tikzpicture}
  \caption{A tanglegram in $\cT_{5}$}
  \label{fig:tau5}
\end{figure}

\begin{theorem}\label{th:liminf}
\[\liminf_{n\rightarrow\infty}\frac{\gamma(n)}{\binom{n}{2}}\geq\frac{1}{2}.\]
\end{theorem}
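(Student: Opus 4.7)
The plan is to prove the theorem in two stages: first obtain a universal lower bound $\crt(T)\ge\binom{k}{2}^2$ for every $T\in\cT_{k^2}$, and then use the definition of $\cT_n$ together with the monotonicity of $\crt$ under passing to subtanglegrams to extend this bound to all $n$, where it suffices because $\binom{k}{2}^2/\binom{n}{2}\to\tfrac12$ when $n$ lies between $k^2$ and $(k+1)^2$.

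For the core inequality, fix any layout of an arbitrary $T=(L,R,M)\in\cT_{k^2}$. Because each $L_i$ is a clade of $L$, its leaves occupy a contiguous block on the line $x=0$; the layout therefore induces a block order on $L_1,\ldots,L_k$ (a permutation $\sigma_L$ of $[k]$), and analogously a block order $\sigma_R$ on the right. Whenever $i\ne i'$, the vertical order of any leaf of $L_i$ relative to any leaf of $L_{i'}$ is dictated entirely by $\sigma_L$, and the same holds on the right. Hence for any two matching edges $v_{ij}w_{ji}$ and $v_{i'j'}w_{j'i'}$ with $i\ne i'$ and $j\ne j'$, whether they cross is determined solely by whether $\sigma_L$ orders $\{i,i'\}$ the same way $\sigma_R$ orders $\{j,j'\}$.

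I would then count. For each unordered pair $\{i,i'\}\subseteq[k]$ and each unordered pair $\{j,j'\}\subseteq[k]$, there are exactly two unordered pairs of matching edges among the four edges incident to leaves in $L_i\cup L_{i'}$ and $R_j\cup R_{j'}$ whose endpoints span all four super-clades, namely $\{v_{ij}w_{ji},v_{i'j'}w_{j'i'}\}$ and $\{v_{ij'}w_{j'i},v_{i'j}w_{ji'}\}$. A short $K_{2,2}$-style case check using the observation above shows that, for any $\sigma_L$ and $\sigma_R$, exactly one of these two pairs crosses (the ``parallel'' pair when the super-orderings agree, the ``twisted'' pair when they disagree). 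Summing over the $\binom{k}{2}^2$ choices of $(\{i,i'\},\{j,j'\})$ forces at least $\binom{k}{2}^2$ crossings in every layout, establishing $\crt(T)\ge\binom{k}{2}^2$.

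Finally, given $n\ge 4$, set $k=\lfloor\sqrt n\rfloor$, so $k^2\le n<(k+1)^2$, and pick any $T\in\cT_n$; by definition $T$ contains some $T'\in\cT_{k^2}$ as a subtanglegram, so $\gamma(n)\ge\crt(T)\ge\crt(T')\ge\binom{k}{2}^2$. The elementary estimate
\[
\frac{\binom{k}{2}^2}{\binom{n}{2}}\ge\frac{\binom{k}{2}^2}{\binom{(k+1)^2}{2}}=\frac{k(k-1)^2}{2(k+1)^2(k+2)}\longrightarrow\frac12\qquad(k\to\infty)
\]
then yields $\liminf_{n\to\infty}\gamma(n)/\binom{n}{2}\ge\tfrac12$. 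The main obstacle is really only the second step: recognizing that clade-contiguity in every plane drawing reduces the crossing question for all-distinct pairs to a pure $K_{2,2}$ problem on the super-layout. Once that structural observation is in hand, the counting and the asymptotic comparison are routine.
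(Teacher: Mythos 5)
Your proposal is correct and follows essentially the same route as the paper: both arguments use the contiguity of the clades $L_1,\ldots,L_k$ and $R_1,\ldots,R_k$ in any layout to show that each of the $\binom{k}{2}^2$ quadruples $(\{i,i'\},\{j,j'\})$ forces at least one crossing among the four spanning edges, giving $\crt(T)\ge\binom{k}{2}^2$ for $T\in\cT_{k^2}$, and then pass to general $n$ via the subtanglegram in $\cT_{\lfloor\sqrt{n}\rfloor^2}$ and the same asymptotic ratio $\binom{k}{2}^2/\binom{(k+1)^2}{2}\to\frac12$. Your explicit $K_{2,2}$-style bookkeeping with the induced block permutations $\sigma_L,\sigma_R$ is just a slightly more detailed rendering of the paper's "either the edges $v_{ia}w_{ai}$ and $v_{jb}w_{bj}$ or the edges $v_{ja}w_{aj}$ and $v_{ib}w_{bi}$ form a crossing" step.
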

\begin{proof}
First we show that for each $k\ge 2$ and $T\in\cT_{k^2}$, $\crt(T)\ge\binom{k}{2}^2$.
Observe that for each $i\in [k]$, $L_{i}$ is a clade of $L$ at one of
the leaves of $L_{0}$. Therefore in any tanglegram layout of $(L,R,M)$
all the leaves of $L_{i}$ appear forming a vertical consecutive block,
for each $i\in [k]$. A similar assertion holds for the leaves of
$R_{i}$, $i\in[k]$.  For any $\{i, j\}, \{a, b\}\subseteq [k]$, there
are 4 edges with both endpoints in the clades $L_i$, $L_j$, $R_a$, and
$R_b$. Because the leaves in a single clade form a vertical
consecutive block in any layout, either the edges $v_{i a}w_{ai}$ and
$v_{jb}w_{bj}$ or the edges $v_{ja}w_{aj}$ and $v_{ib}w_{bi}$ form a
crossing. As a result, $\crt(T) \geq \binom{k}{2}^2$.

As the tangle crossing number of each tanglegram in $\cT_{k^2}$ is
at least $\binom{k}{2}^2$, the tangle crossing number of each
tanglegram in $\cT_n$ with $n>k^2$ is also at least $\binom{k}{2}^2$.

 Let $n\ge 4$ and $k=\lfloor\sqrt{n}\rfloor$, so  $k^2 \leq n < (k+1)^2$. Observe that for each
  tanglegram $T\in \cT_n$, $\crt(T) \geq \binom{k}{2}^2$. Therefore
    \begin{align*}
      \frac{\gamma(n)}{\binom{n}{2}}
      \geq \dfrac{\displaystyle\max_{T\in \cT_n} \crt(T)}{\binom{n}{2}}
      \geq\frac{\binom{k}{2}^2}{\binom{(k+1)^2}{2}}
      &  = \frac{1}{2} \left(1-\frac{2}{k+2}\right) \left(1-\frac{2}{k+1}\right)^2\\
      & \geq  \frac{1}{2} \left(1-\frac{2}{\sqrt{n}+1}\right) \left(1-\frac{2}{\sqrt{n}}\right)^2.
      \end{align*}

As a result,
\[ \liminf_{n\rightarrow\infty} \frac{\gamma(n)}{\binom{n}{2}} \geq
  \liminf_{n\rightarrow\infty} \frac{1}{2}
  \left(1-\frac{2}{\sqrt{n}+1}\right)
  \left(1-\frac{2}{\sqrt{n}}\right)^2 =\frac{1}{2}.\]
\end{proof}

Theorems~\ref{thm:onehalf} and~\ref{th:liminf} complete the proof of
Theorem~\ref{thm:limit}.

\section{Lower bound of the tangle crossing number} \label{lowerb}

Let $T=(L,R,M)$ be a tanglegram of size $n$. In this section, we present an algorithm which outputs a non-trivial lower bound for the tangle crossing number of $T$ in $O(n^4)$ time. As we will show, this lower bound is tight for some tanglegrams with quadratic tangle crossing number. The algorithm runs in two phases. First it partitions the leaves of each tree into clades. In the second phase the clades are used to compute the lower bound for $\crt(T)$. Now we describe the algorithm for partitioning the leaves a given tree into clades, given a restriction on their size. Note that we use this algorithm independently for $L$ and $R$.

\begin{algorithm}
\caption{Partition leaves into clades}\label{alg:clade_partition}
\begin{algorithmic}[1]
\Input A binary tree $B$ and a number $C>1$.
\Output A partition of the leaves of $B$ into clades of size at most $C$.
\State Label each vertex $v$ in $B$ with the number of leaves in the clade of $B$ at $v$.
\State Let $\{v_i\}_{i=1}^k$ be the set of vertices such that the label at $v_i$ is at most $C$ and whose parent has label greater than $C$.
\State \textbf{return} $\{V_i\}_{i=1}^k$, where $V_i$ is the set of leaves in the clade of $B$ at $v_i$.
\end{algorithmic}
\end{algorithm}

Algorithm~\ref{alg:clade_partition} can be implemented in $O(n)$ time. This follows from noting that step 1 requires a post-order traversal of $B$ and each of steps 2 and 3 require a pre-order traversal of $B$. Let $W=\{v_i\}_{i=1}^k$ be the set of vertices from step~2. Note that if $v_i,v_j\in W$, then $v_i$ is not an ancestor of $v_j$ and vice versa. It is easy to see that a consequence of this property is that the collection $\{V_i\}_{i=1}^k$ from step~3 is a partition of the leaves of $B$ into clades. Algorithm~\ref{alg:lower_bound} below computes the lower bound for the tangle crossing number.

\begin{algorithm}
\caption{Tangle crossing number lower bound}\label{alg:lower_bound}
\begin{algorithmic}[1]
\Input Tanglegram $T=(L,R,M)$, and numbers $C_L,C_R>1$.
\Output A lower bound for $\crt(T)$.
\State Use Algorithm~\ref{alg:clade_partition} to obtain $\{U_i\}_{i=1}^\ell$ for $L$ and $C_L$.
\State Use Algorithm~\ref{alg:clade_partition} to obtain $\{V_i\}_{i=1}^r$ for $R$ and $C_R$.
\State For each $U_i$ and $V_j$, let $M_{i,j}$ be the set of matching edges with one endpoint in $U_i$ and one in $V_j$.
\State \textbf{return}
$\sum_{i_1,i_2}\sum_{j_1,j_2}\min\{|M_{i_1,j_1}||M_{i_2,j_2}|,|M_{i_1,j_2}||M_{i_2,j_1|}\},$
where the sum is over $\{i_1,i_2\} \subseteq [\ell]$ and $\{j_1,j_2\} \subseteq [r]$.
\end{algorithmic}
\end{algorithm}

Note that Algorithm~\ref{alg:lower_bound} runs in $O(n^4)$. This follows since steps~1 and~2 take $O(n)$ time, step~3 takes $O(n^2)$ time, and step~4 takes $O(n^4)$ time.
 To prove correctness, suppose $U_a$ and $U_b$ are clades in $L$ and suppose $V_c$ and $V_d$ are clades in $R$. Because these are clades, any layout of $T$ will have either the $M_{ac}$
edges cross the $M_{bd}$ edges or the $M_{ad}$ edges cross the
$M_{bc}$ edges. As a result, these 4 clades will contribute at least
$\min\{|M_{ac}||M_{bd}|, |M_{ad}||M_{bc}|\}$ to the tangle crossing
number. Thus, as done in step~4, summing these minimums over all $\binom{\ell}{2}$ pairs of clades
from $L$ and $\binom{r}{2}$ pairs of clades from $R$, we obtain a
lower bound on $\crt(T)$.

One may notice that Algorithm~\ref{alg:lower_bound} depends on the choice of $C_L$ and $C_R$. When $n=k^2$, the choice of $C_L=C_R=\sqrt{n}$ is optimal for the tanglegrams in $\cT_{k^2}$ from Section~\ref{sec:onehalf} described for the proof of Theorem~\ref{th:liminf}. For each tree in these tanglegrams, Algorithm~\ref{alg:clade_partition} finds the $k$ clades with $k$ leaves that were used to build these trees. With this clade partition, $M_{i,j}=1$ for all $i,j\in [k]$. So the tangle crossing number is at least $\binom{k}{2}^2$ by Algorithm~\ref{alg:lower_bound}.  It is not hard to find tanglegrams in $\cT_{k^2}$ with tangle crossing number exactly $\binom{k}{2}^2$. Thus the output of Algorithm~\ref{alg:lower_bound} for the family of tanglegrams $\cT_{k^2}$ is tight.

We ran simulations for different choices of $C_L$ and $C_R$ with random tanglegrams drawn from a uniform distribution. Figure~\ref{fig:simulation} shows the average lower bounds when $C_L,C_R\in\{4,\sqrt{n},\frac{n}{2}\}$. For each $n\in\{10,\ldots,100\}$, we picked $100$ tanglegrams of size $n$ uniformly at random. The random sampling algorithm is a SageMath~\cite{sagemath} implementation of Algorithm~3 from~\cite[p. 253]{BKM17}. The source code for our implementation is available at~\cite{BL17}.  Based on the simulations, it appears that $C_L=C_R=\frac{n}{2}$ yields better lower bounds.

\begin{figure}[ht]
\includegraphics{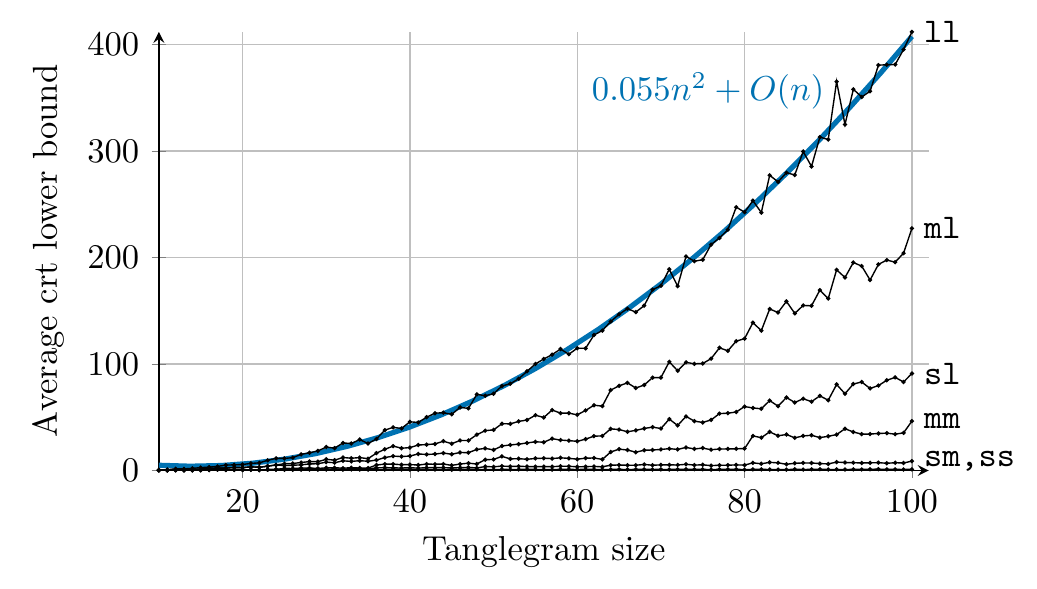}
\caption{The average lower bound for $\crt(T)$ for different choices of $C_L$ and $C_R$. The symbols \texttt{s}, \texttt{m} and \texttt{l} represent $4,\sqrt{n},$ and $n/2$ respectively. The curve labeled \texttt{ml} represents the average output with $C_L=\sqrt{n}$ and $C_R=n/2$.}
\label{fig:simulation}
\end{figure}
In \cite{CSW} it is shown that there exists $C>0$ such that a random tanglegram has tangle crossing number $C n^2$ with high probability.  Fitting the \texttt{ll} curve from Figure~\ref{fig:simulation}, the curve corresponding to $C_L=C_R=n/2$, to a quadratic function via least squares yields $0.055 n^2+O(n)$. This suggests that the tangle crossing number of the random tanglegram is at least $0.055n^2$. For the same sample, a plot of the maximum lower bounds is fit by the curve $0.08n^2 + O(n)$. These two growth rates are to be compared with the upper bound of $0.25n^2$ from Theorem~\ref{thm:limit}.

Another way to view this process is to create an auxiliary bipartite multigraph with a vertex for each clade and the number of edges between two clades is the number of edges which match a vertex of one clade to a vertex of the other clade. We then restrict to straight-line drawings where the vertices of one partite set remain on the line $x=0$ and the vertices of the other partite set lie on the line $x=1$. The minimum crossing number over all such drawings of this multigraph provides a lower bound on the crossing number of the tanglegram. However, Garey and Johnson~\cite{GJ} proved that even this problem on the auxiliary bipartite multigraph is NP-complete.

\section{Open Questions and Further Work}
Although the lower bound provided in Section 5 is tight for many small tanglegrams, we don't expect it being close to the real answer all the time, since we are doing a polynomial time approximation to an NP-hard problem.
One may notice that the lower bound is dependent on the
choice of clades. While we made an arbitrary choice, we are interested
in polynomial time algorithms to choose the clades for an optimized
lower bound.

In Section~\ref{sec:onehalf}, we provided a family of tanglegrams with
crossing number asymptotically $\frac{1}{2}\binom{n}{2}$. While the
tangle crossing number of tanglegrams in $\cT_n$ is at least
$\binom{\lfloor \sqrt{n} \rfloor}{2}^2$,
there are tanglegrams of size $n$ with larger tangle crossing
number.
Is it perhaps true that $\max\{\crt(T):T\in\cT_n\}=\gamma(n)$, at least for $n=k^2$?
We remain interested in the maximum tangle crossing
number over all tanglegrams of size $n$.

\section{Acknowledgements}
The authors would like to extend their gratitude to the American
Mathematical Society for organizing the Mathematics Research Community
workshops where this work began. All authors were
supported by the National Science Foundation under Grant Number DMS
1641020. Smith
was also supported in part by NSF-DMS grant 1344199 and Sz\'ekely was also supported by the NSF-DMS grants
1300547 and 1600811.
Da Lozzo was supported by the U.S.~Defense Advanced
Research Projects Agency (DARPA) under agreement no.~AFRL
FA8750-15-2-0092.
The views expressed are those of the authors and do not reflect the
official policy or position of the Department of Defense or the
U.S.~Government.

\bibliographystyle{plain}
\bibliography{refs_short}

\end{document}